\newcommand{\shrinkmargins}[1]{
  \addtolength{\textheight}{#1\topmargin}
  \addtolength{\textheight}{#1\topmargin}
  \addtolength{\textwidth}{#1\oddsidemargin}
  \addtolength{\textwidth}{#1\evensidemargin}
  \addtolength{\topmargin}{-#1\topmargin}
  \addtolength{\oddsidemargin}{-#1\oddsidemargin}
  \addtolength{\evensidemargin}{-#1\evensidemargin}
  }
\newtheorem{theorem}{Theorem}
\newtheorem{lemma}[theorem]{Lemma}
\newtheorem{corollary}[theorem]{Corollary}
\newtheorem*{definition}{Definition}
\theoremstyle{remark}
\newtheorem*{remark}{Remark}
\numberwithin{theorem}{section} \numberwithin{equation}{section}
\newcommand{\R}{\mathbb{R}}
\newcommand{\C}{\mathbb{C}}
\newcommand{\Q}{\mathbb{Q}}
\newcommand{\Z}{\mathbb{Z}}
\newcommand{\N}{\mathbb{N}}
\newcommand{\G}{\Gamma}
\def\H{\mathbb{H}}
\begin{document}
\title{Maass Spezialschar of Level $N$}
\author{Bernhard Heim}
\address{German University of Technology in Oman, Muscat, Sultanate of Oman}
\email{bernhard.heim@gutech.edu.om}
\subjclass[2010] {Primary 11F46, 11F50; Secondary 11F30, 11F32}
\keywords{Fourier coefficients, Hecke operators, Saito-Kurokawa correspondence, Siegel modular forms}


\begin{abstract}
In this paper the image of the Saito-Kurokawa lift of level $N$ with Dirichlet character
is studied. We give a new characterization of this so called Maass Spezialschar of level $N$ by
symmetries involving Hecke operators related to $\Gamma_0(N)$. We finally obtain for all prime numbers $p$
local Maass relations. This generalizes known results for level $N=1$.
\end{abstract}
\maketitle
\section{Introduction}
In 2012 \cite{Ib12}, T. Ibukiyama gave a systematic treatment 
of Saito-Kurokawa lifts of level $N$ with possible Dirichlet character.
First results in the classical setting had been obtained by B. Ramakrishnan, M. Manickham, and T. Vasudewa \cite{MRV93}.
In this paper, we study the image of the lifting, the Maass Spezialschar of level $N$. 
We obtain a new characterization by symmetries, generalizing previous work on liftings for the full 
Siegel modular group of degree two \cite{He10}. We refer to the original literature \cite{Ma79I, Ma79II, Ma79III,Ku78} and \cite{Za80} for
the Saito-Kurokawa conjecture and the Maass Spezialschar. An excellent introduction is given in \cite{EZ85}.
See also Oda's general viewpoint of theta lifts \cite{Od77}.

Let $F \in M_k^2(\Gamma_0^2 (N), \chi)$ be a Siegel modular form of 
Hecke type of integral weight $k$, degree $2$ and level $N$ with Dirichlet character $\chi$. Here $\chi(-1)= (-1)^k$.

Let $\Delta_N(l) $ be the set of all integral matrices $g = \left( \begin{smallmatrix} a & b \\ c & d \end{smallmatrix}\right)$ with
determinant $l$, with $N  \vert c$ and $(a,N)=1$. We put $\chi(g):= \overline{\chi(a)}$ and $\Gamma_0(N) = \Delta_N(1)$. Let $\vert_k$ be the Petersson slash operator and $\widetilde{g}^{\uparrow \downarrow}$ be two dual embeddings of $\Delta_N(l)$ into the symplectic group $Sp_2(\R)$. Then we have the following new characterisation 
of the Maass Spezialschar. The Siegel modular form $F \in M_k^2(\Gamma_0^2 (N), \chi)$  is a lift if and only if for all $l \in \N$:
\begin{equation}
\sum_{g \in \Gamma_0(N) \backslash \Delta_N(l)}   \chi(g)^{-1} \, \left(F \vert_k \widetilde{g}^{\uparrow} \right) =  
\sum_{g \in \Gamma_0(N) \backslash \Delta_N(l)}   \chi(g)^{-1} \, \left(F \vert_k \widetilde{g}^{\downarrow} \right). \qquad \qquad (*_l).
\end{equation}
The level one case was previously proven \cite{He10} by working out the relation of the 
Taylor expansion and properties of certain differential operators. 
In this paper we give a new and more simple proof by studying the Fourier-Jacobi expansion. 
This approach, involving well-known
properties of the Hecke algebra $\mathcal{H}(\Gamma_0(N), \Delta_N)$ (\cite{Mi06}) 
is more transparent and natural. Here $\Delta_N$ is the union of all $\Delta_N(l)$.
The Hecke algebra is commutative, zero-divisor free and decomposes in local components. Hence it is
sufficient to check the symmetries only locally, which leads finally to the result that $F$ is in the Maass Spezialschar iff
$(*_p)$ is satisfied for all primes $p$. Of course the symmetries degenerate if $p \vert N$. 
For further generalisation, note that the following identity 
in the Hecke algebra $\mathcal{H}(\Gamma_0(N), \Delta_N)$ is crucial.
\begin{equation}
T(m) \circ T(n) = \sum_{\substack{ d \vert (m,n) \\ (d,N) = 1}} d \,\, T(d,d) \,\,T \left(\frac{mn}{d^2} \right)
\end{equation}
The element $T(l)$ degenerates if $(l,N) >1$ (see Miyake \cite{Mi06}, Theorem 4.5.13 (i)).

The symmetries $(*_l)$ encode a new type of Maass relations for Saito-Kurokawa lifts of Hecke type.
Let $\mathcal{X}$ denote the set of
half-integral positive semi definite matrices $\left( \begin{smallmatrix} n & r/2 \\ r/2 & m \end{smallmatrix}\right)$.
Let $\mathcal{X}^{*}$ be the subset, where the zero matrix is removed.
We put $A(T)=0$ if $T \not\in \mathcal{X}$. Let $F \in M_k^2(\Gamma_0^2(N),\chi)$ with Fourier coefficients $A(T)= A(n,r,m)$.
Then $F$ is in the Maass space iff  for all $T \in  \mathcal{X}^{*}$ and $l \in \N$:
 \begin{equation}
 \sum_{d \vert (n,r,l)} d^{k-1} \, \chi(d) \,\, A\left( \frac{nl}{d^2}, \frac{r}{d},m\right)
 =
 \sum_{d \vert (l,r,m)} d^{k-1} \, \chi(d) \,\, A\left( n, \frac{r}{d},\frac{ml}{d^2}\right).
 \end{equation}
Here $(n,r,l)$ denotes the greatest common divisor. 
As a consequence we obtain the useful application that $F$ is in the Maass Spezialschar 
iff for all $T \in \mathcal{X}^{*}$ and for all prime numbers $p$
the following Maass $p$-relations are satisfied:
\begin{align}
A(pn,r,m) + p^{k-1} \chi(p) & A\left(\frac{n}{p}, \frac{r}{p},m \right) \\ = &
A(n,r,pm) + p^{k-1} \chi(p) A\left(n,\frac{r}{p}, \frac{m}{p}\right).\nonumber
\end{align}
This gives a significant generalization to the known Maass $p$-relations for $N=1$ (see the survey
\cite{FPRS13} for further background information).
Note $\chi(p) = 0$ iff $p\vert N$. For $p\vert N$ we have $A(pn,r,m)=A(n,r,pm)$.
In the literature (see \cite{EZ85}, \cite{Ib12}) the equivalent Maass relations are stated as
\begin{equation}
 A(n,r,m) = \sum_{d \vert (n,r,m)} d^{k-1} \, \chi(d) \,\, A\left( \frac{nm}{d^2}, \frac{r}{d},1\right)
 \end{equation}
for all $T \in \mathcal{X}^{*}$.

Recently \cite{HM15}, together with Murase, we had been able to use a multiplicative version of the
symmetry principle $(*_l)$ to give a characterization of holomorphic Borcherds lifts and a new proof 
of Bruiniers converse theorem for the discriminant kernel group. Borcherds proved that his lifts have certain special divisors and
Bruinier proved that if a form has these special divisors, then the form is a lift.
We refer to \cite{Br16} for recent developments. It would be interesting to transfer some of the results of this paper
to the theory of Borcherds lifts for congruence subgroups.
\section{Modular Forms}
For basic facts about elliptic modular forms and Hecke theory we refer to \cite{Sh71,Mi06}. For Siegel modular forms
especially of degree 2 we recommend \cite{Fr83}, \cite{An87} and \cite{EZ85} (also standard reference for Jacobi forms).
\subsection{Preliminaries and Basic Notations}
Let $R$ be a subring of the real numbers $\R$ and let $N,k,n,r,m$ usually denote integers. Let $\chi$ be a Dirichlet character modulo $N$.
We denote $e(Z):= \text{exp}(\text{trace}(Z))$ for every suitable matrix $Z$. The symplectic group $GSp^{+}(n,\R)$ of positive
similitudes of degree $n$ acts on the Siegel upper half space $\H_n$. 
Further
let $F$ be a holomorphic function on $\H_n$
and let $\gamma = \left(\begin{smallmatrix} A & B \\ C & D  \end{smallmatrix}\right) \in GSp^{+}(n,R)$ and $Z \in \H_n$. Then
\begin{eqnarray*}
\gamma(Z) & & := (AZ + B)(CZ + D)^{-1}\\
F\vert_k \gamma \,\, (Z) & & := \text{det}(CZ + D)^{-k} \,\, F(\gamma(Z))\\
\widetilde{\gamma} & & := \text{det}(\gamma)^{\frac{1}{2n}}\\
g^{\uparrow} & & :=
\left( \begin{array} {cccc}a & 0 & b & 0\\ 0 & 1 & 0 & 0 \\
c & 0 & d & 0 \\ 0 & 0 & 0 &1 \end{array} \right)\\
g^{\downarrow} & & :=
\left( \begin{array} {cccc}   1 & 0 & 0 & 0 \\
0 & a & 0 & b \\ 0 & 0 & 1 & 0 \\
0& c & 0 & d \end{array} \right), \qquad 
g = \left( \begin{matrix}
a & b\\
c & d
\end{matrix} \right) \in SL_2(\R)\\
\Delta_N  & &:=  \left\{ \alpha =
\left( \begin{matrix}
a & b\\
c & d
\end{matrix} \right) 
\in GL_2^{+}(\Q) \cap \Z^{2,2}
\,\, \vert 
\,\, (a,N)=1,\,\,
N \vert c, \,\, \text{det}(\alpha) > 0 \right\}\\
Sp(n,R) & & :=\{ \gamma \in GSp^{+}(n,R) \, \vert \, \, \text{det}(\gamma) =1 \}\\
\Gamma_0^{(n)}(N) & & := \left\{ 
\gamma
\in Sp(n,\Z) \,\, \vert \,\, C \equiv 0 \pmod{N} \right\}.
\end{eqnarray*}
Let $\gamma \in \Gamma_0^{(n)}(N)$, we extend $\chi$ by $\chi(\gamma) := \chi(\text{det}(D))$.
We identify $GL_2(R)^{+}$ with $GSp^{+}(1,R)$ and $SL_2(R)$ with $Sp(1,R)$, and drop the index $n=1$ for simplification.
In the case $n=2$ we also identify 
\begin{equation*}
 Z = \left(\begin{matrix} \tau_1 & z \\ z & \tau_2  \end{matrix}\right) \mbox{ with } (\tau_1,z,\tau_2).
\end{equation*}
We further put $$
\mathcal{X} := 
\left\{ T=
\left(\begin{matrix} n & r/2 \\ r/2 & m \end{matrix}\right) \,\, \vert \,\, n,r,m \in \Z, \,\, T \geq 0 \right\}. $$
Then
$\mathcal{X}^{*} := \mathcal{X} -  \left(\begin{smallmatrix} 0 & 0 \\ 0 & 0  \end{smallmatrix}\right)$ and 
$\mathcal{X}^{+} := \{  T \in \mathcal{X} \, \vert \, T >0\}$. We identify $T$ with $(n,r,m)$.
Note that for all $T$ with $\text{det}(T)=0$ there exists a $U \in SL_2(\Z)$ such that $T[U]:= U^t T U = (l,0,0)$ with $l \in \N_0$.
Let us further denote by $d \vert (n,m)$ or $d\vert(n,r,m)$ that $d$ divides the gcd of the involved numbers.
The condition $d\vert (0,0,0)$ is empty.
\subsection{Modular forms of level $N$}
\begin{definition} 
Let $k,N$ be natural numbers. Let $\chi$ a Dirichlet character modulo $N$. Let $\Gamma$ be a congruence subgroup of
$\Gamma_0^{(n)}(N)$. 
A holomorphic function $F$ on $\H_n$ is denoted Siegel modular form of weight $k$, 
degree $n$ and Dirichlet character $\chi$ with respect to $\Gamma$ if for all $g \in \Gamma$ the functional equation
\begin{equation}
F\vert_k g = \chi(g) \,\, F
\end{equation}
is satisfied. In the case $n=1$ we additionally have to propose that $F$ is regular at the cusps.
The space of these forms is denoted by $M_k^n(\Gamma, \chi)$.
\end{definition}
We refer briefly to the behavior of Saito-Kurokawa lifts at the cusps. 
The main focus of this paper is the characterization of lifts independent of their Fourier expansion,
Although we consider the expansion at
infinity to some extent. 
\begin{definition}
We denote by $S_k^n(\Gamma, \chi)$ the subspace of cusp forms. These are $F \in M_k^n(\Gamma, \chi)$ with
$F\vert_k \gamma$ vanishing at all boundaries. 
\end{definition}
See \cite{Fr83, Mi06} and also \cite{Ib12} for a more explicit version of the definition, 
guided by the Satake compactificaton. In a nutshell, let $F$ be holomorphic on 
$\H_n$ satisfying the functional equation for all elements of $\Gamma$. 
Let $V(Y_0):= \{ Y \in \R^{n,n}\, \vert \, Y \geq Y_0 > 0\}$ for $Y_0$ positive definite and $\Gamma_{\chi}$ kernel of $\chi$ on $\Gamma$.
Then $F \in M_k^n(\Gamma, \chi)$ iff $F\vert \gamma$ is bounded on $V(Y_0)$ for all 
$Y_0$ and $\gamma \in \Gamma_{\chi} \backslash Sp(n,\Z)$.
This property is always satisfied for $n>1$ (Koecher principle) and has only be checked for $n=1$.

Further $F \in S_k^n(\Gamma, \chi)$ iff $\Phi(F\vert_k \gamma) =0 $ 
for all $\gamma \in \Gamma_{\chi} \backslash Sp(n,\Z)$. Here $\Phi$ 
is the Siegel lowering operator. We refer to Freitag (\cite{Fr83}, chapter II, 
Satake compactification, see also section 3.1 and 3.2 \cite{Ib12}).\\
{\bf Remark.} Let $n=2$ then it is sufficient to check to cuspidality for the representatives of
$$\gamma \in \Gamma_{\chi} \backslash Sp(2,\Z) / C_{2,1}(\Z).$$
Here $C_{2,1}(\Z)$ is the subgroup of $Sp(2,\Z)$ with last row given by $(0 \, 0 \, 0\, 1)$.
\subsection{Fourier and Fourier-Jacobi expansion}
Let $\Gamma$ be a congruence subgroup of $Sp(2,\Z)$ containing 
$$\left\{ \left( \begin{matrix} 1_2 & S \\ 0 & 1_2 \end{matrix} \right) \, \vert \, S = S^t \in \Z^{2,2} \right\}.$$
Then $F \in M_k^2(\Gamma, \chi)$ has the Fourier expansion
\begin{eqnarray}
F(Z) & = & \sum_{ T \in \mathcal{X}} A(T) \,\, e(TZ)\\
 & = & \sum_{ (n,r,m) \in \mathcal{X}} A(n,r,m) \,\, e(n \tau_1 + r z + m \tau_2)
\end{eqnarray}
In the following we will also put $q_1= e(\tau_1), \zeta = e(z)$ and $q_2= e(\tau_2)$.
Note that $F$ is a cusp form then $A(T) = 0$ for all $T \in \mathcal{X}^{+}$. Note that the converse is not true.
The Fourier-Jacobi expansion of $F$ is given by
\begin{equation}
F(\tau_1,z,\tau_2) = \sum_{m=0}^{\infty}  F_m(\tau_1,z) \,\, q_2^m.
\end{equation}
Then $F_m$ is called the $m$-th Fourier Jacobi coefficient of $F$. It is a Jacobi form of weight $k$ and index $m$.
Note that $F_m$ is a Jacobi cusp form, if $F$ is a cusp form.
\subsection{Jacobi Group}
We consider the Jacobi group $G^J (R)$ as the semi-direct product of $GL_2^{+}(R)$ and the (additive written)
Heisenberg group $$H(R)= 
\left\{ h=(\mu,\lambda; \kappa) \,\, \vert \,\, \mu, \lambda, \kappa \in R \right\}$$ (see \cite{Ib12}, Section 2).
We consider $h^{0}= (\lambda,\mu)$ as a row vector.
Then 
$$
G^J(R) := \left\{ (g,h) \,\, \vert \,\, g \in GL_2^{+}(R), \, h \in H(R) \right\}.$$
The explicit group operation is given by:
$$ (g_1,h_1) (g_2,h_2) = (g_1 g_2, \text{det}(g_2)^{-1} \, (h_1^{0}\,g_2, \kappa_1) + h_2).$$
We further define the following subgroups and monoids  of $G^J(\Z)$.
\begin{eqnarray*}
\Gamma_0(N)^J & := & \left\{ (g,h) \in G^J(\Z) \, \vert \, g \in \Gamma_0(N) \right \} \\
\Delta_N^J & := & \left\{ (g,h) \in G^J(\R) \, \vert \, g \in \Delta_N \mbox{ and }h \in H(\Z)      \right \}.
\end{eqnarray*} Let $\mathbb{H}^J:= \H \times \C$. 
Let $\gamma = \left(\begin{smallmatrix} A & B \\ C & D  \end{smallmatrix}\right) \in GSp^{+}(n,R)$. 
Let $g= \left(\begin{smallmatrix} a & b \\ c & d  \end{smallmatrix}\right) \in GL_2^{+}(R)$ 
with $\text{det}(g)=l$ and $h = (\mu, \lambda; \kappa) \in H(R)$.
Let $f$ be a complex valued function on $\H^J$ and $F$ on $\H_n$.
Let $k,m \in \N_0$. 
\begin{eqnarray*}
\widehat{f}(\tau_1,z,\tau_2)  & & := f(\tau_1,z) \,\, e(m \tau_2), \,\, \mbox{ for }(\tau_1,z,\tau_2) \in \H_2\\
\widehat{g} & &:=  \left(\begin{matrix} 
a & 0 & b& 0\\
0 & l & 0& 0\\
c & 0 & d& 0\\
0& 0 & 0& 1
\end{matrix}\right)\\
\widehat{h} & &:=  \left(\begin{matrix} 
1 & 0 & 0& \mu  \\
\lambda & l & \mu& \kappa   \\
0 & 0 & 1& - \lambda   \\
0& 0 & 0& 1
\end{matrix}\right)\\
\widehat{G}^J(R) & & := \left\{  \widehat{g}\,\, \widehat{h} \,\, \vert \,\, g \in GL_2^{+}(R), h \in H(R)\right\}.
\end{eqnarray*}
Obviously the map 
$\!\!\! \phantom{x}^{\wedge}$ 
is a group isomorphism between $G^J(R)$ and $\widehat{G}^J(R)$, 
where the semi-direct product structure can be
recovered. Let $(g_1,h_1), (g_2,h_2) \in G^J(\R)$. Then
$$\widehat{(g_1,h_1)} \widehat{ (g_2,h_2)} =  \left( \widehat{g_1}\widehat{g_2}\right)\,\,
\left( \widehat{g_2}^{-1}\widehat{h_1}\widehat{g_2}
\widehat{h_2}\right), 
$$
$$\widehat{g_2}^{-1}\widehat{h_1}\widehat{g_2} \in \widehat{H(\R)}.$$
\subsection{Jacobi Forms of level $N$}
In this section we recall the definition of Jacobi forms
of level $N$ with Dirichlet character.
Let $f:\H \times \C \longrightarrow \C$ and $k,m \in \N_0$.  Let $g^J = (g,h) \in G^J(\R)$ 
with $\text{det}(g) = l$. Then we attach to $f$ the function $\widetilde{f}$ defined by
\begin{equation}
\left(  \widehat{f}\vert_k \widehat{g}^J\right) \,(\tau_1,z,\tau_2) = 
\widetilde{f}(\tau_1,z) \,\, e(-ml\tau_2).
\end{equation}
This leads to a canonical action of $G^J(\R)$ on $\H^J$
and the definition of Jacobi forms. 
This avoids explicit calculations and displays the essential properties directly.
\begin{definition}
Let $\Phi$ be a holomorphic function on $\H^J$. Let $k,m \in \N_0$. 
Let $\chi$ be a Dirichlet character modulo $N$. Let $\Gamma^J$ be a congruence subgroup of 
$G^J(\Z)$ with the same Heisenberg part.
We denote by $\Phi$ a Jacobi form of weight $k$ and index $m$ with character $\chi$ 
with respect to $\Gamma^J$ if
$\Phi$ satisfies:
\begin{itemize}
\item[(1)] $\Phi\vert_{k,m} g^J = \chi(g) \Phi, \qquad  \mbox{ for all } g^J= (g,h) \in \Gamma^J$
\item[(ii)] For any $g \in GL_2^{+}(\Q)$, $\Phi\vert_{k,m}g$ has the Fourier expansion
\begin{equation*}
\sum_{r,n \in \Q} c^g(n,r) \,\, q^n \, \zeta^r,
\end{equation*}
where $c^g(n,r)=0$ unless $4nm-r^2 \geq 0$.
\end{itemize}
We say $\Phi$ is a Jacobi cusp form if $c^g(n,r)=0$ unless $4nm-r^2 > 0$ is satisfied.
The space of Jacobi form is denoted by $J_{k,m}(\Gamma^J,\chi)$ and the subspace of cusp forms by $J_{k,m}^{\text{cusp}}(\Gamma^J,\chi)$ 
\end{definition}
\begin{remark}
The property(ii) needs only be checked for $g \in SL_2(\Z)$.  Here the sum is running over
$n \in h_g^{-1}\Z, \, r \in \Z$ with $c^g(n,r)=0$, unless $4nm-r^2 \geq 0$ (and $>$ for being a cusp form).
\end{remark}
\begin{remark}
Let $F$ be a cusp form for a congruence subgroup on $\H_2$. Then $F$ vanishes at every cusp. Equivalent the Fourier expansion at each cusp
has only support (parametrization of Fourier coefficients) at positive definite half-integral matrices. Hence at each cusp the
Fourier Jacobi coefficients are Jacobi cusp forms.
\end{remark}
Next we recall the definition of the index shift operator $V_{l,\chi}$ 
(see \cite{Ib12}, section 3) and finally define the Saito-Kurokawa lift.

\begin{definition}
For $\chi$ a Dirichlet character modulo $N$ and an element of $\Delta_N$:
\begin{equation}\label{extend}
\chi \left( \begin{smallmatrix}
a & b\\
c & d
\end{smallmatrix} \right) := \overline{\chi(a)}.
\end{equation}
\end{definition}
\begin{definition}
Let $k,N \in \N$ and let $\chi$ be a Dirichlet character modulo $N$. Let $ \Phi \in J_{k,m}(\Gamma_0(N)^J,\chi)$, $m \in \N_0$.
Then we define for all $l\in \N$ the index shift operator:
\begin{equation*}
V_{l,\chi} : 
J_{k,m}(\Gamma_0(N), \chi)^J      \longrightarrow        J_{k,ml}(\Gamma_0(N), \chi)^J
\end{equation*}
given by the explicit construction
\begin{eqnarray*}
V_{l,\chi}(\Phi) &:=& 
l^{k-1} 
\sum_{   
g \in \Gamma_0(N) \backslash \Delta_N(l)} \chi(g)^{-1} \,\, \Phi \vert_{k,m} g\\
&=&
l^{k-1} 
\sum_{   
g \in \Gamma_0(N) \backslash \Delta_N(l)}
\chi(a) \, (c\tau +d)^{-k} \,\, e^{-lm \frac{c\tau^2}{c\tau +d}} \,\, \Phi (g(\tau), \frac{lz}{c \tau +d} )
\\
& = & l^{k-1}  V_{l,\chi}^{\circ}(\Phi).
\end{eqnarray*}
Here $\left( \begin{smallmatrix}
a & b\\
c & d
\end{smallmatrix} \right)$ and $g(\tau) = \frac{a \tau + b}{c \tau +d}$.
\end{definition}
\begin{definition} 
Let $\chi$ be a Dirichlet character modulo $N$. Let $ \Phi \in J_{k,m}(\Gamma_0(N)^J,\chi)$.
Then $\mathcal{L}_{N,\chi}(\Phi)$ is called the Saito-Kurokawa lift of $\Phi$. It is defined by
\begin{equation}
\mathcal{L}_{N,\chi}(\Phi)
\left( \begin{matrix} \tau_1 & z \\ z & \tau_2 \end{matrix}\right)
 := c(0) f_{k,\chi}(\tau_1) +
\sum_{l=1}^{\infty} 
V_{l,\chi}(\Phi)(\tau_1,z) \,\, e(l\tau_2).
\end{equation}
Here $c(0)$ is the constant term of $\Phi$. For the definition of the Eisenstein series $f_{k,\chi}$ we refer to
\cite{Ib12}, section 3.2.
\end{definition}
Theorem 3.2 and Theorem 3.6 \cite{Ib12} states that $\mathcal{L}$ is a linear injective map to $M_k (\Gamma_0^J(N),\chi)$.
If $\Phi$ is a cusp form. Then $\mathcal{L}(\Phi)$ is a cusp form. 
The image of $\mathcal{L}$ is called Maass Spezialschar of level $N$.
\section{Hecke Theory}
References: Krieg \cite{Kr90}, Miyake \cite{Mi06}, Shimura \cite{Sh71}.
Let $G$ be a group and $\Gamma$ a subgroup. Two subgroups are commensurable 
if the intersection has finite index in each of the two subgroups.
Let $\widetilde{\Gamma}$ be all elements $g \in \G$ such that 
$g \Gamma g^{-1}$ is commensurable with the subgroup $\Gamma$ itself.

Let $\Delta \subset G$ be a monoid and $\Xi$ a set of commensurable subgroups $\Gamma$ of $G$, such that
$\Gamma \subset \Delta \subset \widetilde{\Gamma}$. Let $R$ be a commutative ring with $1$. Then we denote 
by 
$$ \mathcal{H}_R (\Gamma, \Delta) := \left\{\sum_{\alpha \in \Delta} a_{\alpha} \, \Gamma \alpha \Gamma \,\,  \vert \,\,
a_{\alpha} \in R \mbox{ and } a_{\alpha}=0 \mbox{ for almost all } \alpha
\right\}$$
the free $R$-module generated double cosets. 
Let further $R[\Gamma \backslash \Delta]$ denote the free $R$-module 
generated by the $\Gamma \alpha$ cosets, where $\alpha \in \Delta$.

Next, let $\Delta$ act on a $R$-module $M$ by $m \mapsto m^{\alpha}$.
Let $M^{\Gamma}$ be the submodule of $\Gamma$-invariant elements of $M$.
Let $\Gamma \alpha \Gamma =\sqcup_i \Gamma \alpha_i \in R[\Gamma \backslash \Delta]$ be the disjoint coset decomposition.
This identification leads to $\mathcal{H}_R (\Gamma, \Delta) = R[\Gamma \backslash \Delta]^{\Gamma}$. 
Note that $\mathcal{H}=\mathcal{H}_R (\Gamma, \Delta)$ acts on $M^{\Gamma}$ via 
$$m\vert \Gamma \alpha \Gamma := \sum_i m^{\alpha_i}.$$
Note that $m^{\alpha}$ is in general not invariant by $\Gamma$, but by $\alpha^{-1} \Gamma \alpha \cap \Gamma$.
Let $\widetilde{M}:= R[\Gamma \backslash \Delta]^{\Gamma}$. Then the action of $\mathcal{H}$ on 
$\widetilde{M}$
implies the following
multiplication of double cosets. Let $\Gamma \alpha \Gamma =\sqcup_i \Gamma \alpha_i $ and $\Gamma \beta \Gamma =\sqcup_i \Gamma \beta_i$.
Then 
\begin{equation}
\Gamma \alpha \Gamma \circ \Gamma \beta \Gamma := \sum_{\gamma} \Gamma \gamma \Gamma,
\end{equation}
where
$$
c_{\gamma} = \sharp \left\{ (i,j) \, \vert \, \Gamma \alpha_i \beta_j = \Gamma \gamma \right\}.
$$
\subsection{Representations of Hecke algebras}
We make the assumption that $G=GL_2^{+}(\R)$ and $\Gamma$ a Fuchsian group with finite character $\chi$.
Let $\mathcal{H}$ be the Hecke algebra attached to the Hecke pair $(\Gamma, \Delta)$.
We further assume that
\begin{itemize}
\item[(i)] $\chi$ can be extended to a character of $\Delta$ and

\item[(ii)] that for $\alpha \in \Delta$ and $\gamma \in \Gamma$ with $\alpha \Gamma \alpha^{-1} \in \Gamma$:\\
$\chi( \alpha \gamma \alpha^{-1}) = \chi(\gamma)$.
\end{itemize}
Let $\Xi$ be the set of all subgroups of $\Gamma$ of finite index. 
Let $k\in \Z$ be fixed. Let $\Gamma_1$ be any element of $\Xi$.
Let $M_k(\Gamma_1,\chi)$ be the vector space of holomorphic functions on $\H$ (and the cusps) satisfying:
\begin{eqnarray*}
\left( f\vert_k \gamma \right) (\tau) &:=& j(\gamma,\tau)^{-k} \, f(\gamma(\tau))\\
&=& \chi(\gamma) \, f(\tau) \qquad \mbox{for all } \gamma \in \Gamma_1.
\end{eqnarray*}
Then $\Delta$ acts on the $\Z$-module 
\begin{equation*}
 M := \bigcup_{ \Gamma_1 \,\in \,\, \Xi} M_k ( \Gamma_1, \chi)
\end{equation*}
by mapping $f \in M_k ( \Gamma_1, \chi)$ to an element $f^{\alpha} \in M_k ( \Gamma_1 \cap \alpha^{-1} \Gamma_1 \alpha, \chi)$:
$$ f \mapsto f^{\alpha}:= \overline{\chi(\alpha)} \, f\vert_k \alpha$$
(here we apply property (ii) from above).
Let $\Gamma \alpha \Gamma = \sqcup_i \Gamma \alpha_i$. Then the 
operation of of the Hecke algebra $\mathcal{H}$ on $M^{\Gamma}$ is given by
\begin{equation}
f\vert \Gamma \alpha \Gamma := \sum_i f^{\alpha_i}.
\end{equation}
This extends linearly to $h \in \mathcal{H}$ and called Hecke operators. 
We refer to Miyake \cite{Mi06}, Remark 2.8.1 and 2.8.2 for a short discussion 
on elements of the Hecke algebra and Hecke operators.
\subsection{ Structure of the Hecke Algebra $\mathcal{H}(\Gamma_0(N), \Delta_N)$} \ \\
Let $\chi$ be a Dirichlet character modulo $N$. We have extented (\ref{extend}) to
$\Delta_N$
in such a way that (ii) is satisfied.
The Hecke theory applies to $G=GL_2^{+}(\R), \Delta = \Delta_N, \Gamma = \Gamma_0(N) 
\mbox{ and } R= \Z$.
Let $\mathcal{H} = \mathcal{H}(\Gamma_0(N), \Delta_N)$.
Let $[a,d]$ be the diagonal matrix
$\left( \begin{smallmatrix}
a & 0\\
0 & d
\end{smallmatrix} \right) $. Every double coset 
$$\Gamma_0(N)\, \alpha \,  \Gamma_0(N) = \Gamma_0(N)\, [a,d] \, \Gamma_0(N) =: T(a,b)$$
can be uniquely represented by a diagonal matrix $[a,d]$, where $(a,N) = 1, \, a \vert d, \mbox{ and } ad= \text{det}(\alpha)$.
Further let
\begin{equation}\label{special} T(l) = \sum_{\substack{ad =l,\, a \vert d,\\ (a,N) =1}}
T(a,d) = \bigsqcup_{\substack{ad =l, \,(a,N)=1,\\ b \mod{d}}} \Gamma_0(N) 
\left( \begin{smallmatrix}
a & b\\
0 & d
\end{smallmatrix} \right) = \Gamma_0(N) \backslash \Delta_N(l).
\end{equation}
Here we identified double cosets with elements in $$\Z[ \Gamma_0(N)\backslash \Delta_N]^{\Gamma_0(N)}.$$
Double cosets decompose in local components. Let $a_1\vert d_1$ and $a_2 \vert d_2$. Then 
\begin{equation}
T(a_1 a_2, d_1 d_2) = T(a_1,d_1) \circ T(a_2,d_2) \mbox{ if } (d_1,d_2)=1.
\end{equation}
The Hecke algebra is commutative and decomposes as a restricted tensor product in local Hecke algebras $\mathcal{H}_p$ for all prime numbers $p$.
$$ \mathcal{H} = \otimes_p \mathcal{H}_p,$$
where $\mathcal{H}_p$ is generated by $T(p)$ and $T(p,p)$ 
if $p \not| N$ and 
$T(p)$ otherwise. Hence for every $h \in \mathcal{H}_p$ with $(p\not| N)$ we have
$ h \in \Z[x,y]$, where 
\begin{eqnarray*}
x&= &T(p)= \Gamma_0(N) 
\left( \begin{smallmatrix}  p & 0\\  0 & 1
\end{smallmatrix} \right) 
\bigsqcup_{b \pmod{d}} \Gamma_0(N) 
\left( \begin{smallmatrix}
1 & b\\
0 & p
\end{smallmatrix} \right)\\
y &=& T(p,p) = \Gamma_0(N) 
\left( 
\begin{smallmatrix}
p & 0\\
0 & p
\end{smallmatrix} 
\right).
\end{eqnarray*}
Let $p\vert N$. Then $h \in \Z[T(p)]$, where $$T(p) = \bigsqcup_{b \pmod{d}} \Gamma_0(N) 
\left( \begin{matrix}
1 & b\\
0 & p
\end{matrix} \right) \mbox{ for } p \vert N.$$
We will transfer the result of \cite{Mi06}, Theorem 4.5.13 (1) to the theory of lifts.
\begin{theorem}\label{identity}
Let $m,n$ are natural numbers. Then we have the following identity 
in the Hecke algebra $\mathcal{H}(\Gamma_0(N), \Delta_N)$.
\begin{equation}
T(m) \circ T(n) = \sum_{\substack{ d \vert (m,n) \\ (d,N) = 1}} d \,\, T(d,d) \,\, T \left(\frac{mn}{d^2} \right).
\end{equation}
\end{theorem}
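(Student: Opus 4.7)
The plan is to exploit the tensor product decomposition $\mathcal{H} = \bigotimes_{p} \mathcal{H}_{p}$ to reduce the identity to a purely prime-local statement, and then to handle the two cases $p \nmid N$ and $p \mid N$ separately.

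First I would establish that both sides factor multiplicatively with respect to the prime decompositions of $m$ and $n$. Writing $m = \prod_p p^{v_p(m)}$ and $n = \prod_p p^{v_p(n)}$, the identity $T(a_1 a_2, d_1 d_2) = T(a_1,d_1) \circ T(a_2,d_2)$ for $(d_1,d_2)=1$ together with (\ref{special}) gives $T(m) = \prod_p T(p^{v_p(m)})$, so the left-hand side factors over primes. For the right-hand side, the conditions $d \mid (m,n)$ and $(d,N)=1$ split into an independent choice of $e_p = v_p(d)$ with $0 \le e_p \le \min(v_p(m),v_p(n))$, subject to $e_p = 0$ whenever $p \mid N$; moreover $T(d,d) = \prod_p T(p,p)^{e_p}$ because each $\sm{p}{0}{0}{p}$ is central, so the individual factors commute and multiply without multiplicity. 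Hence the right-hand side factors as well, and it suffices to verify the identity when $m = p^i$, $n = p^j$.

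For $p \nmid N$, the condition $(a,N)=1$ is vacuous on matrices supported at $p$, so $\mathcal{H}_p$ coincides with the standard local Hecke algebra at $p$ for $SL_2(\Z)$, generated by $T(p)$ and $T(p,p)$. Miyake's Theorem 4.5.13 (1) supplies
\[
T(p^i) \circ T(p^j) = \sum_{e=0}^{\min(i,j)} p^{e} \, T(p,p)^{e} \circ T\bigl(p^{i+j-2e}\bigr),
\]
which is precisely the desired identity, since $(p^e, N) = 1$ is automatic.

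For $p \mid N$ the local algebra is generated by $T(p) = \bigsqcup_{b \, (\bmod \, p)} \Gamma_0(N) \sm{1}{b}{0}{p}$ alone. Multiplying representatives $\sm{1}{c}{0}{p^{i-1}} \sm{1}{b}{0}{p} = \sm{1}{b+cp}{0}{p^{i}}$ and noting that $(b,c) \mapsto b + cp \bmod p^{i}$ is a bijection $\Z/p\Z \times \Z/p^{i-1}\Z \to \Z/p^{i}\Z$, an easy induction gives $T(p)^{i} = T(p^{i})$ and hence $T(p^i) \circ T(p^j) = T(p^{i+j})$. On the right-hand side the constraints $(d,N)=1$ and $d \mid p^{\min(i,j)}$ force $d = 1$, leaving only the term $T(p^{i+j})$, so the two sides agree. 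The main obstacle is the bookkeeping in the reduction step: one must ensure that the divisibility condition $(d,N)=1$ in the RHS sum is correctly matched with the constraint $a_p = 1$ forced on the local double cosets when $p \mid N$; once that bookkeeping is pinned down, Miyake's classical identity handles everything else.
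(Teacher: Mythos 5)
Your proposal is correct, and it actually does more than the paper, which offers no proof of Theorem \ref{identity} at all: the sentence preceding the statement merely announces that the result of Miyake, Theorem 4.5.13\,(1), is being ``transferred'', and that theorem is verbatim the identity in question for the pair $(\Gamma_0(N),\Delta_N)$. Your route --- factor both sides over primes via $\mathcal{H}=\otimes_p\mathcal{H}_p$ and the relation $T(a_1a_2,d_1d_2)=T(a_1,d_1)\circ T(a_2,d_2)$ for $(d_1,d_2)=1$, then verify the prime-power case --- is sound, and the bookkeeping you worried about works out exactly as you describe: the constraint $(d,N)=1$ on the right-hand side matches the fact that in the local component at $p\mid N$ the class $T(p,p)$ does not exist, since $(a,N)=1$ forces $a=1$ in every double coset $T(a,d)$ supported at such $p$. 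Your computation at $p\mid N$ is also right: the products $\sm{1}{c}{0}{p^{i-1}}\sm{1}{b}{0}{p}=\sm{1}{b+cp}{0}{p^{i}}$ hit each coset $\Gamma_0(N)\sm{1}{e}{0}{p^{i}}$, $e\bmod p^{i}$, exactly once, so $T(p)^{i}=T(p^{i})$ with all structure constants equal to $1$, and both sides reduce to $T(p^{i+j})$. The one point to tighten is at the good primes: you invoke Miyake's Theorem 4.5.13\,(1) to supply the local identity $T(p^{i})\circ T(p^{j})=\sum_{e}p^{e}T(p,p)^{e}\circ T(p^{i+j-2e})$, but that theorem \emph{is} the global statement being proved, so as written this step is a self-citation rather than an argument. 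Replace it by the standard recursion $T(p)\circ T(p^{i})=T(p^{i+1})+p\,T(p,p)\circ T(p^{i-1})$ for $p\nmid N$ (proved by the same explicit coset multiplication you carry out at $p\mid N$), from which the prime-power identity follows by a short induction; with that substitution your proof is complete and self-contained, which is more than the paper provides.
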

To apply the general theory define $\Xi$ to be the set of all subgroups of $\Gamma_0(N)$ and
\begin{eqnarray*}
M & & : =\bigcup_{ \Gamma \, \in \, \Xi} M_k (\Gamma, \chi).\\
\end{eqnarray*}
Then $\Delta_N$ operates on $M$ by $f^{\alpha} := \overline{\chi (\alpha)}\,\, f\vert_k \, \alpha$.
The Hecke algebra operates on
$M^{\Gamma_0(N)}$. Actually it already operates on $M_k(\Gamma_0(N),\chi)$.
We are mainly interested in the operation of $T(l,l)$ and $T(l)$ on $M^{\Gamma_0(N)} $.
\begin{eqnarray*}
f & \mapsto & T(l,l)(f) = f^{[l,l]} = \overline{\chi(l)}\, l^{-k}\, f\\
f & \mapsto & T(l)(f) = \sum_{\substack{ ad =l, \,(a,N)=1 \\ b \pmod{d}}} 
\overline{\chi(a)}\, f\vert_k \left(\begin{matrix} a & b \\ 0 & d \end{matrix} \right).
\end{eqnarray*}
We have the two Hecke algebras $\mathcal{H}:= \mathcal{H}( \Gamma_0(N)), \Delta_N)$ and
$\mathcal{H}^J:= \mathcal{H}( \widehat{\Gamma}_0^J(N)), \widehat{\Delta}_N^J)$. We are mainly
interested in the image of the embedding
\begin{equation*}
\iota: \mathcal{H} \hookrightarrow \mathcal{H}^J, 
\qquad
\Gamma_0(N) \alpha \Gamma_0(N) \mapsto
\widehat{\Gamma}_0(N)^J \,\,\widehat{\alpha}\,\, \widehat{\Gamma}_0(N)^J. 
\end{equation*}
This map respects the coset decomposition
\begin{equation*}
\bigsqcup  \Gamma_0(N)\,\,\alpha_i   \mapsto \bigsqcup \widehat{\Gamma}_0(N)^J\,\, \widehat{\alpha_i}.
\end{equation*}
Note that this property is implicitly used in the definition of the 
operator $V_{N,\chi}(l)$ in \cite{EZ85}, \cite{Ib12} (see also \cite{He99}, section 3). Let 
\begin{equation}
M^J :=
\bigcup_{m=0}^{\infty} \,\, \bigcup_{\Gamma \,  \in  \,\Xi}  \widehat{J}_{k,m}\left(\Gamma \ltimes H(\Z),\chi \right).
\end{equation}
Here $\Xi$ denotes the set of all congruence subgroup of $\Gamma_0(N)$.
Let $\alpha \in \Delta_N(l)$, then
$M^J \longrightarrow M^J, \Phi \mapsto {\Phi}^{\widehat{\alpha}}$, where
$\Phi \in J_{k,m} (\Gamma_0^J(N),\chi)$. Then
\begin{eqnarray*}
\Phi \vert \Gamma_0^J(N) \, \alpha \, \Gamma_0^J(N) & := & \sum_i \Phi^{\alpha_i} \in J_{k,ml} (\Gamma_0^J(N),\chi)\\
& = & \sum_i   \overline{\chi(\alpha_i)}   \,\,\Phi \vert_{k,m} \alpha_i.
\end{eqnarray*}
We frequently switch between $\Phi$ and $\widehat{\Phi}$ and consider $\alpha$ as element of 
$\Delta_N$, $\Delta_N^J$, and $\widehat{\Delta}_N$
accordingly. We make all the obvious identifications if clear from the context.
Note that cusp forms map to cusp forms.
Finally we perform the translation of the formula 
(\ref{identity} into the Hecke-Jacobi algebra. Note
that a priori it was not clear that this is possible, 
since the general Hecke-Jacobi algebra is not abelian and has zero divisors \cite{He99}.
Let $V^0(m)$ correspond to $T(m)$ and $V^0(d,d)$ if $(d,N)=1$ as elements of $\mathcal{H}^J$.
Then we obtain
inside the Hecke algebra $\mathcal{H}(\Gamma_0^J(N), \Delta_N)$ the important algebraic identity
\begin{equation}\label{core}
V^0(m) \circ V^0(n) = \sum_{\substack{ d \vert (m,n) \\ (d,N) = 1}} d \,\, V^0(d,d) \,\,V^0 \left(\frac{mn}{d^2} \right).
\end{equation}

\section{Main Results}
The Maass Spezialschar of level $N$ is given by
\begin{equation}
M_k^{\text{Spez}} (\Gamma_0^2(N),\chi) := \left\{  \mathcal{L}_{N,\chi}(\Phi) \,\, 
\vert \,\, \Phi \in J_{k,1}(\Gamma_0^J(N), \chi) \right\}.
\end{equation}
The subspace of cusp form we denote by $S_k^{\text{Spez}}(\Gamma_0^2(N),\chi)$. A Siegel modular form
$F M_k^2(\Gamma_0(N)^J,\chi)$ is in the Maass Spezialschar iff all the Fourier coefficients of $F$ satisfy the
general Maass relations 
\begin{equation}\label{ibu}
 A(n,r,m) = \sum_{d \vert (n,r,m)} d^{k-1} \, \chi(d) \,\, A\left( \frac{nm}{d^2}, \frac{r}{d},1\right).
\end{equation}
See also \cite{Ib12}section 3.4 and the observations at the end of the proof of 
Theorem \ref{TheoremSymmetry}.
Our argument is the following. All Fourier coefficients $A(T)$, $T \in \mathcal{X}^{*}$ 
are determined by the first Fourier-Jacobi coefficient of $F$.
This is a Jacobi form of weight $k$ and index $1$ of level $N$ 
and the relations reflect exactly the definition of $\mathcal{L}_{N,\chi}$.

In this section we prove that $F$ is a Saito-Kurokawa lift iff $F$ satisfies symmetries $(*_l)$ for all $l \in \N$.
We state two applications. First, it is sufficient to check $(*_p)$ for prime numbers and second
we obtain symmetric Maass relations (of course equivalent to (\ref{ibu}). Combined we obtain local Maass $p$-relations
generalizing the known level $N=1$ case, discovered first by Pitale, Schmidt and the author \cite{FPRS13}.

\subsection{Maass Spezialschar and Symmetries}
Saito-Kurokawa lifts, elements in the Maass Spezialschar, can be characterized by symmetries.
Note that these symmetries $(*_l)$ for all $l \in \N$ make it possible to study Saito-Kurokawa lifts by
properties of the Hecke algebra $\mathcal{H}(\Gamma_0(N), \Delta_N)$ originally constructed to study elliptic modular forms.

\begin{theorem}\label{TheoremSymmetry}
Let $k$ and $N$ be positive integers. Suppose $\chi$ is a Dirichlet character modulo $N$ satisfying $\chi(-1)= (-1)^k$.
Let $F \in M_k(\Gamma_0^{(2)}(N), \chi )$ be a Siegel modular form of weight $k$, degree $2$, and level $N$ with Dirichlet character $\chi$.
Then $F$ is a Saito-Kurowaka lift if and only if $F$ satisfies for all $l \in \N$ the symmetry relation $(*_l)$ given by
\begin{equation}\label{symmetrynew}
\sum_{g \in \Gamma_0(N) \backslash \Delta_N(l)}   \chi(g)^{-1} \, \left(F \vert_k \widetilde{g}^{\uparrow} \right) =  
\sum_{g \in \Gamma_0(N) \backslash \Delta_N(l)}   \chi(g)^{-1} \, \left(F \vert_k \widetilde{g}^{\downarrow} \right). \qquad \qquad (*_l).
\end{equation}
\end{theorem}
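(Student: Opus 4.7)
The plan is to expand $F$ in its Fourier series
$$F = \sum_{(n,r,m) \in \mathcal{X}} A(n,r,m)\, q_1^n \zeta^r q_2^m,$$
translate $(*_l)$ into a symmetric identity among the coefficients $A(n, r, m)$, and then match the resulting condition against Ibukiyama's Maass-relation characterisation of the Spezialschar.

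First I would compute explicitly the $q_1^n \zeta^r q_2^m$-Fourier coefficient of each side of $(*_l)$. Using the coset representatives $g = \sm{a}{b}{0}{d}$ of $\Gamma_0(N) \backslash \Delta_N(l)$, with $ad = l$, $(a,N)=1$, $b \pmod d$, I would unpack the slash actions of the symplectic embeddings $\widetilde{g}^{\uparrow}$ and $\widetilde{g}^{\downarrow}$ on $F$. Modulo the slash normalisation encoded by $\widetilde{\ }$, the $\uparrow$-embedding acts on the $(\tau_1, z)$-variables while leaving the $\tau_2$-exponent essentially intact, whereas the $\downarrow$-embedding acts symmetrically on $(\tau_2, z)$; in each case the sum over $b \pmod d$ produces the required divisibility conditions. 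Collecting terms, I expect
$$[q_1^n \zeta^r q_2^m]\bigl(\text{LHS of }(*_l)\bigr) = \sum_{d \mid (n,r,l)} d^{k-1} \chi(d)\, A\!\left(\tfrac{nl}{d^2}, \tfrac{r}{d}, m\right),$$
$$[q_1^n \zeta^r q_2^m]\bigl(\text{RHS of }(*_l)\bigr) = \sum_{d \mid (l,r,m)} d^{k-1} \chi(d)\, A\!\left(n, \tfrac{r}{d}, \tfrac{ml}{d^2}\right).$$
Thus $(*_l)$ holding for all $l \in \N$ is equivalent to the symmetric Maass $l$-relation (1.3) for every $(n,r,m) \in \mathcal{X}^*$.

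For the ``if'' direction I would then specialise (1.3) at $m = 1$: since only $d = 1$ divides $1$, the right-hand side collapses to $A(n,r,l)$, yielding
$$A(n,r,l) = \sum_{d \mid (n,r,l)} d^{k-1} \chi(d)\, A\!\left(\tfrac{nl}{d^2}, \tfrac{r}{d}, 1\right).$$
Relabelling $l \leftrightarrow m$ recovers the Maass relation (1.5), and Ibukiyama's theorem then identifies $F$ with an element of the Maass Spezialschar.

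For the ``only if'' direction I would assume $F = \mathcal{L}_{N,\chi}(\Phi)$ with $\Phi \in J_{k,1}(\Gamma_0^J(N),\chi)$, so that (1.5) holds. Substituting (1.5) into each side of (1.3) and reparametrising the divisor sums by $c = de$, both sides collapse to a sum over $c \mid \gcd(n,r,l,m)$ with $(c,N)=1$ of terms involving only $A(\cdot,\cdot,1)$; that the two weighted counts of factorisations $c = de$ agree is the Fourier-coefficient shadow of the Hecke-algebra identity
$$T(m) \circ T(n) = \sum_{\substack{d \mid (m,n) \\ (d,N) = 1}} d\, T(d,d)\, T\!\left(\tfrac{mn}{d^2}\right),$$
i.e.\ equation (1.2), transferred from $\mathcal{H}$ to $\mathcal{H}^J$ via $\iota$. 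The main technical obstacle is Step 1: carefully tracking the slash normalisation $\widetilde{\ }$, the simultaneous dependence of $F$ on all three variables $(\tau_1, z, \tau_2)$, and the termwise exchange of the $b \pmod d$ coset sum with the Fourier expansion of $F$. Once that calculation is in hand, Steps 2 and 3 reduce to elementary divisor arithmetic driven by (1.2).
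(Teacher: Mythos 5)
Your proposal is correct in outline but follows a genuinely different route from the paper's proof of this theorem. The paper argues at the level of Fourier--Jacobi coefficients: it breaks the symmetry of $(*_l)$ by slashing with $A=1_2\times\operatorname{diag}(\sqrt{l},\sqrt{l}^{-1})$, identifies the $l$-th Fourier--Jacobi coefficient of the deformed relation $(*_l^A)$ with the statement $F_l=V_{l,\chi}(F_1)$, and then recovers $(*_l)$ from $F_l=V_{l,\chi}(F_1)$ (all $l$) by comparing all $m$-th Fourier--Jacobi coefficients and invoking the identity $V^0(m)\circ V^0(n)=\sum_{d\mid(m,n),\,(d,N)=1}d\,V^0(d,d)\,V^0(mn/d^2)$ transferred into the Jacobi--Hecke algebra. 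You instead stay with scalar Fourier coefficients throughout: $(*_l)\Leftrightarrow$ the symmetric relations (1.3), then (1.3) at $m=1$ yields Ibukiyama's relations (1.5), and (1.5)$\Rightarrow$(1.3) by divisor arithmetic. Your Step 1 is exactly the computation the paper carries out in the proof of Corollary 4.2, so in effect you promote that corollary to a proof of the theorem; this avoids the Jacobi--Hecke algebra entirely, at the cost of leaning harder on Ibukiyama's coefficient characterization of the Spezialschar and of replacing the clean algebraic identity (1.2) by an explicit combinatorial check. There is no circularity, since you prove (1.5)$\Rightarrow$(1.3) directly rather than through $(*_l)$.

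Two points need repair. First, in Step 3 the reparametrised double sum does not run over $c\mid\gcd(n,r,l,m)$: writing $c=de$ one only gets $c\mid r$ and $c^2\mid nlm$ (for $n=m=p$, $l=p^2$, $r=0$ the term $c=p^2$ genuinely occurs although $\gcd(n,l,m)=p$). What is true, and what you must prove, is that the weighted multiplicities
\begin{equation*}
N_L(c)=\#\bigl\{(d,e):de=c,\ d\mid(n,r,l),\ e\mid(nl/d^2,\,r/d,\,m)\bigr\}
\end{equation*}
and the mirror count $N_R(c)$ agree for every $c$; this reduces to prime powers and an inspection of the resulting $\min/\max$ expressions (the asymmetric terms are discarded using $c\mid r$), and it is indeed the coefficient shadow of (1.2) --- but as literally stated your collapse is false. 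Second, specialising (1.3) at $m=1$ gives the Maass relation only for matrices whose lower-right entry is positive; the singular coefficients $A(n,0,0)$, which control the Eisenstein part of the lift, are not covered. You can recover them by specialising at $n=1$ instead together with the symmetry $A(n,r,m)=A(m,r,n)$ (coming from invariance of $F$ under $\tau_1\leftrightarrow\tau_2$), or by the separate argument the paper gives at the end of its proof, where $a(l)=A(l,0,0)$ is shown to equal $\bigl(\sum_{d\mid l}d^{k-1}\chi(d)\bigr)a(1)$ and Ibukiyama's classification of admissible constant terms is invoked. Neither issue is fatal, but both must be addressed explicitly.
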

\begin{proof}
Note that $(*_l)$ is well-defined, since
\begin{equation*}
\chi(\gamma \, g) = \chi(g) = \chi(a)^{-1};  \qquad g=\left( \begin{matrix}
a & b\\
c & d
\end{matrix} \right) \in \Delta_N(l) \mbox{ and } \gamma \in \Gamma_0(N).
\end{equation*}
First we show that $(*_l)$ implies that the $l$-th Fourier-Jacobi (FJ) coefficients $F_l$ of $F$ satisfy 
$F_l = V_{l,\chi}(F_1)$. This implies that for $F \in S_k(\Gamma_0^{(2)}(N), \chi )$ all FJ coefficients are obtained by
$V_{l,\chi}(F_1)$, where $F_1 \in J_{k,1}^{\text{cusp}} (\Gamma_0^J(N),\chi)$. Hence $F = \mathcal{L}_{n,\chi}(F_1)$.
For the general case we refer to the end of this proof.
Let
$$ A: = \left( \begin{matrix}
1 & 0\\
0 & 1
\end{matrix} \right) \times 
\left( \begin{matrix}
\sqrt{l} & 0\\
0 & \sqrt{l}^{-1}
\end{matrix} \right), \mbox{ then } 
A\left( \begin{matrix} \tau_1 & z \\ z & \tau_2 \end{matrix} \right) =
 \left( \tau_1, \sqrt{l} z, l \tau_2 \right).
$$
We deform $(*_l)$ on both sides by $\vert_k A$. This breaks the symmetry of $(*_l)$.
Nevertheless the projective matrices $\widetilde{g}^{\uparrow \downarrow}$ become integral and the iff part of the Theorem still
remains. Let $(*_l^A)$ be given by
\begin{equation}\label{broken}
\sum_{g \in \Gamma_0(N) \backslash \Delta_N(l)}   \chi(g)^{-1} \, \left(F \vert_k \widetilde{g}^{\uparrow}A \right) =  
\sum_{g \in \Gamma_0(N) \backslash \Delta_N(l)}   \chi(g)^{-1} \, \left(F \vert_k \widetilde{g}^{\downarrow} A\right). \qquad \qquad (*_l^A).
\end{equation}
We calculate the left side of $(*_l^A)$. 
Note that 
$$ \widetilde{g}^{\uparrow} \, A = \sqrt{l}^{-1} \left( g \times  \left( \begin{matrix}
l & 0\\
0 & 1
\end{matrix} \right) \right),$$
which implies that $\left(F \vert_k \widetilde{g}^{\uparrow}A \right) = l^{k} F \vert _k \widehat{g}$.
This action is compatible with the FJ expansion of $F$:
$$F(\tau_1,z,\tau_2) = \sum_{m=1}^{\infty} F_m(\tau_1,z) \, q_2^m \mbox{ with }  q_2 = e(\tau_2).$$
Finally we obtain for the $l$-th FJ coefficient of the left side of $(*_l^A)$ the expression
$$ l^k V_{l,\chi}^0 (F_1).$$
Next we determine the $l$-th FJ-coefficient of the right side of $(*_l^A)$. We fix for $\Gamma_0(N) \backslash \Delta_N(l)$
the special representation system (\ref{special}) and obtain:
\begin{eqnarray*}
\sum_{\substack {a,d \in \N;\,\, ad=l\\ b \pmod{d}}}
\left( \frac{d}{l}\right)^{-k} \,\, \chi(a) \, 
F \left( \tau_1, a z, a^2 \tau_2 +  \frac{b}{d} \right).
\end{eqnarray*}
The $l$-th of this expression is equal to
\begin{eqnarray*}
\sum_{a,d \in \N;\,\, ad=l}
\left( \frac{d}{l}\right)^{-k} \,\, \chi(a) \, 
F_{\frac{l}{a^2}}(\tau_1,az) \,\, 
\left( \sum_{b \pmod{d}} e\left( \frac{l}{a^2}\frac{b}{d}\right)\right),
\end{eqnarray*}
which simplifies to $l \,\, F_l$ (only the term $d=l$ contributes).
\\
\\
Conversely, assuming that $V_{l,\chi}(F_1) = F_l$ for all implies $(*_l)$ for all $l \in \N$
by applying a pure algebraic relation in a corresponding Hecke algebra.
We start in comparing the $m$-th Fourier Jacobi coefficients of both sides 
of $(*_l^A)$, where $m=l_1 \, l_2$ and $l_2 = l$.
We obtain for the left side:
\begin{eqnarray*}
l_2^k \,\, 
\sum_{\substack{a,d \, \in \, \N;\,\, ad=l\\ b \pmod{d}}}
\chi(a) \, 
F_{l_1}  \left( \frac{a \tau_1 +b}{d}, az, l_2 \tau_2 \right) \\
= l_2^k \,\, V_{l_2,\chi}^0 (F_{l_1}) = l_2^k \,\, l_1^{k-1}\,\, 
\left( V_{l_2,\chi}^0 \circ V_{l_1,\chi}^0\right) (F_1).
\end{eqnarray*} For the right side we obtain:
\begin{eqnarray*}
& &
\sum_{a,d \, \in \, \N;\,\, ad=l_2}
\left( \frac{d}{l_2}\right)^{-k} \,\, \chi(a) \, 
F_{\frac{m}{a^2}}(\tau_1,az) \,\, 
\left( \sum_{b \pmod{d}} e\left( \frac{m}{a^2}\frac{b}{d}\right)\right)\\
= & &
\sum_{ad=l_2, \,\,  a \vert l_1} d \,\, 
\left( \frac{d}{l_2}\right)^{-k} \,\, \chi(a) \, 
F_{\frac{m}{a^2}}(\tau_1,az) \\
= & &
 l_2 \sum_{a \vert (l_1,l_2) } a^{k-1} \,\, \chi(a) \, 
F_{\frac{m}{a^2}}(\tau_1,az) \\
= & &
 l_2 \left(l_1 l_2\right)^{k-1}
 \sum_{a \vert (l_1,l_2) } a^{1-k} \,\, \chi(a) \, 
V_{\frac{l_1 l_2}{a^2}\chi}^0(F_1)(\tau_1,az).
\end{eqnarray*}
The operator $V^0(a,a)$ is defined by 
\begin{eqnarray*}
V^0(a,a)(F)(\tau_1,z,\tau_2)  & :=  &  \chi(a) \,\, F\vert_k 
\widehat{\left( \begin{matrix} a & 0 \\ 0 & a \end{matrix} \right)}
(\tau_1,z,\tau_2)\\
& = &  \chi(a) \,\, a^{-k} F(\tau_1,az, a^2 \tau_2)
,
\end{eqnarray*} 
which
leads to an action on Jacobi forms. Hence the right side is equal to
\begin{equation}
l_2 \left(l_1 l_2\right)^{k-1} 
\sum_{\substack{a \vert (l_1,l_2)\\ (a,N)=1} } 
a \left( V^0(a,a) \circ V_{\frac{l_1 l_2}{a^2}\chi}^0\right) (F_1).
\end{equation}
Comparing the left and right side, we are left with showing the following identity
inside the Hecke algebra $\mathcal{H}(\Gamma_0^J(N), \Delta_N^J)$:
\begin{equation}
V^0(m) \circ V^0(n) = \sum_{\substack{ d \vert (m,n) \\ (d,N) = 1}} d \,\, V^0(d,d) \,\,V^0 \left(\frac{mn}{d^2} \right).
\end{equation}
This is pure algebraic and independent of the involved Jacobi forms and Fourier Jacobi expansions. This formula has been obtained
in section 3 on Hecke theory.

Finally we consider the case when $F$ is not necessarily a cusp form. 
Let $A(n,r,m)$ be the Fourier coefficients of $F$. Then $(*_l)$
implies that
\begin{equation}
\sum_{d \vert (n,l)}
d^{k-1} \, \chi(d) \, A \left( \frac{nl}{d^2},0,0 \right) = 
\sum_{d \vert \,\,  l}
d^{k-1} \, \chi(d) \, A \left( n,0,0 \right).
\end{equation}
Let $a(l):= A(l,0,0)$. Then we obtain
$$
a(l) = \left( \sum_{d \vert \,\, l}
d^{k-1} \, \chi(d) \right)  \,\, a(1).$$
All possible $a(0)$ such that
\begin{equation}
f(\tau) = \sum_{n=0}^{\infty} \,\, a(n) \,\, q^n \,\, \in \, M_k( \Gamma_0, \chi) 
\end{equation}
are classified in \cite{Ib12}. Hence $F$ is a Saito-Kurokawa lift in the sense of Ibukiyama.
\end{proof}
\subsection{Applications}
\begin{corollary}
Let $F \in M_k^2(\Gamma_0^{(2)}(N),\chi)$ with Fourier expansion
$$ F(\tau_1,z,\tau_2) = \sum_{ T =(n,r,m) \in \mathcal{X}}  A(n,r,m) \,\, q_1^n \, \zeta^r \,\, q_2^m.$$
Then the following properties are equal.
\begin{itemize}
\item[(i)] $F$ is a Saito-Kurokawa lift (also called Maass lift)
\item[(ii)] All the Fourier coefficients of $F$ satisfy:
\begin{equation*}
 A(n,r,m) = \sum_{d \vert (n,r,m)} d^{k-1} \, \chi(d) \,\, A\left( \frac{nm}{d^2}, \frac{r}{d},1\right).
 \end{equation*}
\item[(iii)]All the Fourier coefficients of $F$ satisfy for all $l \in \N$:
 \begin{equation*}
 \sum_{d \vert (n,r,l)} d^{k-1} \, \chi(d) \,\, A\left( \frac{nl}{d^2}, \frac{r}{d},m\right)
 =
 \sum_{d \vert (l,r,m)} d^{k-1} \, \chi(d) \,\, A\left( n, \frac{r}{d},\frac{ml}{d^2}\right).
 \end{equation*}
\end{itemize}
\end{corollary}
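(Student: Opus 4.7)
The plan is to combine Theorem \ref{TheoremSymmetry} with Ibukiyama's classical characterization of the Maass Spezialschar and an elementary Fourier coefficient extraction. The equivalence (i) $\iff$ (ii) is precisely the Maass--Ibukiyama relation of level $N$, i.e.\ equation \eqref{ibu} above, and is \cite[Theorems 3.2, 3.6 and \S 3.4]{Ib12}, so we may simply cite it. This leaves (i) $\iff$ (iii), which by Theorem \ref{TheoremSymmetry} reduces to showing that (iii) is nothing but the Fourier-coefficient translation of the symmetry $(*_l)$ for every $l \in \N$.

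To carry out that translation, I would extract the $(n,r,m)$-th Fourier coefficient of both sides of $(*_l)$ using the explicit coset system \eqref{special},
$$
\Gamma_0(N)\backslash \Delta_N(l) = \bigsqcup_{\substack{ad=l,\,(a,N)=1\\ b \bmod d}} \Gamma_0(N)\sm{a}{b}{0}{d},
$$
noting that $\chi(g)^{-1} = \chi(a)$ for $g = \sm{a}{b}{0}{d}$. For such $g$ the slash action of $\widetilde{g}^{\uparrow}$ affects the $(\tau_1, z)$ variables but leaves the $q_2$-index untouched, and pulls out a factor $d^{-k}$ from the Petersson cocycle once the similitude normalization $l^{-1/4}$ is absorbed. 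Computing $F\vert_k \widetilde{g}^{\uparrow}$ on a general Fourier term $A(n',r',m')\,q_1^{n'}\zeta^{r'}q_2^{m'}$ and summing over $b \bmod d$ via the orthogonality $\sum_{b \bmod d} e(\alpha b/d) = d\cdot\mathbf{1}[d \mid \alpha]$ collapses the sum to the divisors $d \mid l$ with $d \mid n$ and $d \mid r$; after substituting $n' = nl/d^2$, $r' = r/d$, $m' = m$ and combining scalar factors, the left-hand side of $(*_l)$ contributes
$$
\sum_{d \mid (n,r,l)} d^{k-1}\chi(d)\, A\!\left(\tfrac{nl}{d^2}, \tfrac{r}{d}, m\right).
$$
The mirror computation for $\widetilde{g}^{\downarrow}$, which acts on $(\tau_2, z)$ rather than $(\tau_1, z)$, produces the expression with $n$ and $m$ interchanged. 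Equating the two sides of $(*_l)$ then gives exactly the relation of (iii).

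The main bookkeeping obstacle is tracking the similitude normalization $\widetilde{g}$ together with the Jacobian factors of the Petersson slash for both embeddings $\uparrow$ and $\downarrow$. This is, however, merely the Fourier-coefficient refinement of the Fourier--Jacobi computation already carried out in the proof of Theorem \ref{TheoremSymmetry} -- one reads off the $(n,r,m)$-th coefficient instead of aggregating into the $m$-th Fourier--Jacobi coefficient -- so no new conceptual ingredients are required. As a side benefit, specializing (iii) to $l = p$ prime immediately yields the local Maass $p$-relations \eqref{ibu} in the form (1.4), recovering the symmetric presentation advertised in the introduction.
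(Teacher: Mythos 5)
Your proposal matches the paper's proof in all essentials: (i)$\iff$(ii) is cited to Ibukiyama, and (i)$\iff$(iii) is obtained by translating the symmetry $(*_l)$ of Theorem \ref{TheoremSymmetry} into Fourier coefficients via the upper-triangular coset representatives and the orthogonality sum over $b \bmod d$, with the $\downarrow$ side giving the mirror expression in $n$ and $m$. The only (immaterial) divergence is that the paper also notes the shortcut that (iii)$\Rightarrow$(ii) by setting $m=1$, whereas you route both implications through the symmetry relation.
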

\begin{proof}
The Maass lift (called also Saito-Kurokawa lift, see \cite{Ib12} Introduction) 
and the relations of Fourier coefficients is given in \cite{Ib12}, section 3.4.
For the readers convenience we recall the equivalence of (i) and (ii). 
Let $F$ be a Maass lift then (ii) is satisfied (Proposition 3.8, \cite{Ib12}). If (ii) is satisfied
then $F$ is uniquely determined by the first Fourier-Jacobi coefficient 
and all the other Fourier-Jacobi coefficients are the expected lifts in the setting of Jacobi forms.
\\
(iii) implies (i) by putting $m=1$ in formula (iii). Next we show that (i) implies (iii).
We have already proven that $F$ is a Maass lift if and only if
\begin{equation*}
\sum_{g \in \Gamma_0(N) \backslash \Delta_N(l)}   \chi(g)^{-1} \, \left(F \vert_k \widetilde{g}^{\uparrow} \right) =  
\sum_{g \in \Gamma_0(N) \backslash \Delta_N(l)}   \chi(g)^{-1} \, \left(F \vert_k \widetilde{g}^{\downarrow} \right). \qquad \qquad (*_l)
\end{equation*}
for all $l \in \N$. 
We fix for $\Gamma \backslash \Delta_N(l)$
the special representative system
\begin{equation}
\left\{
\left( \begin{array}{cc} a & b \\ 0 & d \end{array} \right)
\,\, \Big{|} \,\, a,b \in \N; \,\, ad=l;  \,\,(a,N)=1;\,\, b=0,1,\ldots, d-1 
\right\}.
\end{equation}
Note that $\chi(g)^{-1} = \chi(a)$. Then the left side of the equation $(*_l)$ is equal to
\begin{eqnarray*}
\sum_{\substack {a,b \in \N;\,\, ad=l\\ b \pmod{d}}}
l^{\frac{k}{2}} d^{-k} \chi(a) \, 
F \left( \frac{a \tau_1 +b}{d}, l^{-\frac{1}{2}} a z, \tau_2 \right)
\end{eqnarray*}
We consider the left side of $(*_l)$.
For convenience we map $z \mapsto \sqrt{l} \, z$ and keep in mind that 
$\sum_{ b \pmod{d}} e(n \frac{b}{d}) = d$ if $d \vert n$ and $0$ otherwise. We obtain
$$\sum_{n,r,m} l^{\frac{k}{2}} \sum_{\substack{ a,d \in \N \\ ad=l, \,\, d\vert n}} 
d^{-k+1} \, \chi(a) \, A(n,r,m) \, q_1^{\frac{an}{d}} \zeta^{ra} q_2^m.$$
This is equal to
$$ l^{1-\frac{k}{2}} \sum_{n,r,m} \sum_{ a \vert (n,r,l)} 
a^{k-1} \, \chi(a) \,\,
A  \left( \frac{nl}{a^2}, \frac{r}{a},m \right)  q_1^n \, \zeta^r \, q_2^m.$$
Since the left side of the relation $(*_l)$ is symmetric to the right side this leads to the proof.
\end{proof}
Actually one has to check the relations in (iii) only for $l$ prime numbers. This follows from the observation
\begin{corollary} Let $F$ be a Siegel modular form of level $N$.
Then $F$ is a Saito-Kurowaka lift if and only if $F$ satisfies the symmetry relation $(*_l)$ for all prime numbers $l$.
\end{corollary}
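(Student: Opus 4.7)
The plan is to reduce the symmetry relation $(*_l)$ for arbitrary $l \in \N$ to the prime case by exploiting the Hecke algebra identity of Theorem \ref{identity}. One direction is immediate from Theorem \ref{TheoremSymmetry}: if $F$ is a Saito-Kurokawa lift, then $(*_l)$ holds for every $l$, in particular for primes. All of the work lies in the converse.

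I would first recast $(*_l)$ as the operator identity $T^{\uparrow}(l)(F) = T^{\downarrow}(l)(F)$, where $T^{\uparrow}(l)$ and $T^{\downarrow}(l)$ are the endomorphisms of $M_k^2(\Gamma_0^{(2)}(N), \chi)$ given by the two sides of (\ref{symmetrynew}). Each of $T^{\uparrow}$ and $T^{\downarrow}$ arises from an embedding $\Delta_N \hookrightarrow Sp(2,\R)$ and therefore extends to an algebra homomorphism from $\mathcal{H}(\Gamma_0(N), \Delta_N)$ into $\mathrm{End}(M_k^2(\Gamma_0^{(2)}(N), \chi))$; in particular the identity of Theorem \ref{identity} is respected by both maps. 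Two preparatory facts are needed: (i) $T^{\uparrow}(m)$ and $T^{\downarrow}(n)$ commute for all $m,n$, since $\widetilde{g_1}^{\uparrow}$ and $\widetilde{g_2}^{\downarrow}$ sit in two commuting $SL_2$-blocks of $Sp(2,\R)$; and (ii) $T^{\uparrow}(d,d) = T^{\downarrow}(d,d) = \overline{\chi(d)} \cdot \mathrm{id}$ for $(d,N)=1$, because $[d,d] = d\cdot I_2$ is central and its tilde-normalized up/down embeddings act trivially on $\H_2$ with trivial automorphy factor.

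The main step is induction on the number of prime factors of $l$, counted with multiplicity. For a prime power $l = p^{k+1}$ with $p \nmid N$, the identity of Theorem \ref{identity} rearranges to
\[
T(p^{k+1}) = T(p) \circ T(p^k) - p\, T(p,p) \circ T(p^{k-1}).
\]
Applying this through $T^{\uparrow}$ and $T^{\downarrow}$ and evaluating at $F$, the correction terms match by (ii) together with the inductive hypothesis at $p^{k-1}$, while for the main term the chain
\[
T^{\uparrow}(p)\, T^{\uparrow}(p^k)(F) = T^{\uparrow}(p)\, T^{\downarrow}(p^k)(F) = T^{\downarrow}(p^k)\, T^{\uparrow}(p)(F) = T^{\downarrow}(p^k)\, T^{\downarrow}(p)(F) = T^{\downarrow}(p)\, T^{\downarrow}(p^k)(F)
\]
combines the inductive hypothesis at $p^k$, the commutation (i), the assumption $(*_p)$ for $F$, and commutativity within $T^{\downarrow}$. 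The case $p\mid N$ is easier as the $T(p,p)$ term is absent. For general $l = p^a l'$ with $\gcd(p, l')=1$, Theorem \ref{identity} reduces to $T(l) = T(p^a) \circ T(l')$, and a parallel commuting-embedding argument transports the induction to the prime-power case. With $(*_l)$ in hand for all $l$, Theorem \ref{TheoremSymmetry} forces $F$ to be a Saito-Kurokawa lift.

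The only nontrivial input will be the preparatory facts (i) and (ii)---short matrix verifications that the up and down embeddings commute and that their diagonal parts coincide. Once these are in place, everything else is formal bookkeeping in the commutative Hecke algebra.
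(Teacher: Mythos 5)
Your proposal is correct and follows essentially the same route the paper intends: the paper's proof is just the one-line appeal to Section 3.2, i.e.\ to the local structure of $\mathcal{H}(\Gamma_0(N),\Delta_N)$ and the identity of Theorem \ref{identity}, which is exactly what you use to write $T(l)$ as a polynomial in the $T(p)$ and $T(p,p)$ and propagate $(*_p)$ to $(*_l)$ by induction. Your write-up supplies the details (the up/down homomorphism property, the commutation of the two embeddings, and the scalar action of $T(d,d)$) that the paper leaves implicit.
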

\begin{proof}
This follows from the results of section 3.2.
\end{proof}
Putting this together leads to
\begin{corollary}({\bf Maass $p$-relations})\\
Let $F$ be a Siegel modular form of level $N$. Then $F$ is a Saito-Kurokawa lift iff the Fourier coefficients of $F$ satisfy
\begin{equation}
A(np,r,m) + p^{k-1} \chi(p) \,\, A \left(  \frac{n}{p}, \frac{r}{p},m \right) =  
A(n,r,p m) + p^{k-1} \chi(p) \,\, A \left(  n, \frac{r}{p},  \frac{m}{p} \right). 
\end{equation}
for all prime numbers $p$. Note that for $p\vert N$ the relations degenerate to
$$ A(np,r,m) = A(n,r,p m).$$
\end{corollary}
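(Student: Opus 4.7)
The plan is to derive the Maass $p$-relations directly from the preceding two corollaries: namely, the symmetric Fourier-coefficient relations (iii) from the first corollary, combined with the observation in the second corollary that it suffices to check the symmetry $(*_l)$ only at prime values $l=p$.

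First I would fix a prime $p$ and specialize the symmetric relation
\begin{equation*}
\sum_{d \vert (n,r,l)} d^{k-1} \, \chi(d) \,\, A\!\left( \tfrac{nl}{d^2}, \tfrac{r}{d},m\right)
 =
 \sum_{d \vert (l,r,m)} d^{k-1} \, \chi(d) \,\, A\!\left( n, \tfrac{r}{d},\tfrac{ml}{d^2}\right)
\end{equation*}
to the case $l=p$. Since $p$ is prime, on each side the inner sum has at most two terms, corresponding to $d=1$ and $d=p$. On the left, $d=1$ contributes $A(np,r,m)$, and $d=p$ contributes $p^{k-1}\chi(p)\,A(n/p,r/p,m)$ provided $p\mid(n,r)$. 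On the right, $d=1$ contributes $A(n,r,pm)$, and $d=p$ contributes $p^{k-1}\chi(p)\,A(n,r/p,m/p)$ provided $p\mid(r,m)$.

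Next I would invoke the standing convention $A(T)=0$ whenever $T\notin\mathcal{X}$, which allows me to drop the divisibility side-conditions: if $p$ does not divide a required argument, the corresponding coefficient is already zero, so the formula
\begin{equation*}
A(np,r,m) + p^{k-1} \chi(p)\,A\!\left(\tfrac{n}{p},\tfrac{r}{p},m\right) = A(n,r,pm) + p^{k-1} \chi(p)\,A\!\left(n,\tfrac{r}{p},\tfrac{m}{p}\right)
\end{equation*}
holds for every triple $(n,r,m)\in\mathcal{X}^{*}$. Combining with the previous corollary, these relations for all primes $p$ are therefore equivalent to $F$ being a Saito-Kurokawa lift.

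For the degenerate case $p\mid N$, note that by construction $\chi(p)=0$, so the terms involving $p^{k-1}\chi(p)$ on both sides vanish, and the relation collapses to $A(np,r,m) = A(n,r,pm)$, as claimed. The main (and only) technical point is the bookkeeping that justifies the removal of the divisibility conditions on $d=p$, which is clean given the convention on $A$ outside $\mathcal{X}$; the rest is immediate specialization. No further argument is needed, so the proof reduces to the one-line citation of section 3.2 just as the author's intended proof does.
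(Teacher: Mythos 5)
Your proposal is correct and follows exactly the route the paper intends: the paper derives this corollary by ``putting together'' the equivalence (i)$\Leftrightarrow$(iii) with the reduction to prime $l$, and your specialization of relation (iii) to $l=p$, using the convention $A(T)=0$ for $T\notin\mathcal{X}$ to absorb the divisibility conditions on the $d=p$ terms and $\chi(p)=0$ for $p\mid N$ in the degenerate case, is precisely the omitted bookkeeping. No gaps.
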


{\bf Acknowledgements.} To be entered later.

 \end{document}